\newtheorem{defn}{Definition}
\numberwithin{defn}{section} 
\newtheorem{theorem}{Theorem}
\numberwithin{theorem}{section}
\newtheorem{propn}{Proposition}
\numberwithin{propn}{section}
\numberwithin{conjecture}{section}
\newtheoremstyle{dotless}{}{}{\itshape}{}{\bfseries}{}{ }{}
\theoremstyle{dotless}
\newtheorem{stheorem}[theorem]{Theorem}
\journal{.}
\begin{document}

\def\R{\mathbb{R}}
\def\P{\mathbb{P}}
\def\Z{\mathbb{Z}}
\def\N{\mathbb{N}}
\def\Q{\mathbb{Q}}
\def\D{\mathbb{D}}
\def\Sp{{\mathbb{S}}}
\def\T{\mathbb{T}}
\def\H{\mathbb{H}}
\def\hb{\hfil \break}
\def\ni{\noindent}
\def\i{\indent}
\def\a{\alpha}
\def\b{\beta}
\def\e{\epsilon}
\def\d{\delta}
\def\De{\Delta}
\def\g{\gamma}
\def\qq{\qquad}
\def\L{\Lambda}
\def\E{\cal E}
\def\G{\Gamma}
\def\F{\cal F}
\def\K{\cal K}
\def\A{\cal A}
\def\B{\cal B}
\def\M{\cal M}
\def\P{\cal P}
\def\Om{\Omega}
\def\om{\omega}
\def\s{\sigma}
\def\th{\theta}
\def\Th{\Theta}
\def\vp{\varphi}
\def\z{\zeta}
\def\p{\phi}
\def\m{\mu}
\def\n{\nu}
\def\l{\lambda}
\def\Si{\Sigma}
\def\q{\quad}
\def\qq{\qquad}
\def\half{\frac{1}{2}}
\def\hb{\hfil \break}
\def\half{\frac{1}{2}}
\def\pa{\partial}
\def\r{\rho}
\def\Spd{{{\Sp}}^d}
\def\mont{\left({\mathcal{I}}f\right)}
\def\desc{\left({\mathcal{D}}f\right)}
\def\curlp{{\mathcal{P}}(\Sp^d)}
\def\curlpr{{\mathcal{P}}(\Sp^d \times \R)}
\def\du{\, \mathrm{d}u}

\begin{frontmatter}

\title{Dimension walks on $\Sp^d \times \R$.}

\author{N. H. Bingham}
\author{Tasmin L. Symons\corref{mycorrespondingauthor}}
\address{Department of Mathematics, \\ Imperial College London, \\ London, UK, SW7 2AZ}
\cortext[mycorrespondingauthor]{Corresponding author}
\ead{tls111@ic.ac.uk}



\begin{abstract}
We verify that the established one- and two-step recurrences for positive definite functions on spheres $\Sp^d$ extend to the spatio-temporal case of $\Sp^d \times \R$.
\end{abstract}

\begin{keyword}
Positive definite functions \sep spatio-temporal covariances \sep dimension walks.
\MSC[2010]   	60G60 \sep 33C45
\end{keyword}

\end{frontmatter}

\linenumbers

\section{Introduction}

Let $X$ be a mean-zero and isotropic Gaussian process index by the unit sphere $\Sp^d$ embedded in $\R^{d+1}$, with continuous covariance 
$$
\text{Cov}(\theta_1, \theta_2) = (f \circ \cos) (d(\theta_1, \theta_2)), \qquad \theta_1, \theta_2 \in \Sp^d, f: [-1,1] \to \R,
$$
where $d(\cdot, \cdot)$ denotes geodesic distance on the sphere. 

Gaussian processes on the sphere are of particular interest in cosmology and geostatistics, with the latter application highlighted recently by Gneiting \cite{gne}. This reignited interest in the geostatistical theory of these processes has coincided neatly with the growing availability of high-resolution, global satellite data. Here the curvature of the Earth becomes essential: whilst the sphere is a manifold, thus locally Euclidean, a ``flat Earth'' approximation runs counter to spherical geometry over large distances. 

As Gaussian processes are determined entirely by their means (which we take here to be constant and zero) and covariance functions, the relevant mathematical theory is of positive definite functions on the sphere, and is long-established and rich.

\begin{defn}
The class of continuous functions $f : [-1, 1] \to \R$ such that $(f \circ \cos)$ is positive definite on $\Sp^d$ is denoted $\mathcal{P}(\Sp^d)$.
\end{defn}

The central theorem is Schoenberg's characterisation of $\mathcal{P}(\Sp^d)$ by Fourier-Gegenbauer expansions.  The ultraspherical (Gegenbauer) polynomials are 
$$
W_n^{\nu}(x) := C_n^{\nu}(x)/C_n^{\nu}(1) = C_n^{\nu}(x).n! \Gamma(2 \nu)/\Gamma(n + 2 \nu);
$$
these are orthogonal under the probability measure
$$
G_{\nu}(dx) := \frac{\G(\nu + 1)}{\sqrt{\pi} \G(\nu + \half)}.
(1 - x^2)^{\nu - \half} dx \qquad (x \in [-1,1]):
$$
$$
\int_{-1}^1 W_m^{\nu}(x) W_n^{\nu}(x) G_{\nu}(dx) = {\delta}_{mn}/{\om}_n^{\nu}, \qquad
{\om}_n^{\nu} := \frac{(n + \nu)}{\nu}.\frac{\G(n + 2 \nu)}{\G(2 \nu)}.
$$
The Fourier-Gegenbauer coefficients and series of $f \in C([-1,1])$ are
$$
\hat f(n) := \int_{-1}^1 f(x) W_n^{\nu}(x) G_{\nu}(dx); \qquad
f(x) \sim \sum_{n=0}^{\infty} {\om}_n^{\nu} \hat f (n) W_n^{\nu}(x).
$$

\begin{stheorem}{\emph{(Schoenberg's Theorem \cite{sch}).}}\label{thrm:Sch}\\
Let $d \in \N$, $\l = (d-1)/2$. A continuous function $f: [-1,1] \to \R$ is in $\mathcal{P}(\Sp^d)$ if and only if 
$$
f(x) = \sum_{n=0}^\infty a_n W^\l_n(x)
$$ 
with $(a_n)$ a probability distribution on $\N_0 = \N \cup \{ 0\}$. Then $a_n = {\om}_n^{\nu} \hat f (n)$. 
\end{stheorem}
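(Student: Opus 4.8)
The plan is to prove the two implications separately, using the harmonic analysis of $\Sp^d$ as the main tool: the decomposition of $L^2(\Sp^d)$ into the spaces $\mathcal{H}_n$ of spherical harmonics of degree $n$, the addition theorem, and the Funk--Hecke formula. Throughout, for $\th, \phi \in \Sp^d$ I write $\langle \th, \phi \rangle$ for the Euclidean inner product, so that $\langle \th, \phi \rangle = \cos d(\th, \phi)$ and positive definiteness of $f \circ \cos$ says exactly that $\sum_{i,j} c_i c_j\, f(\langle \th_i, \th_j \rangle) \ge 0$ for every finite system of points $\th_i \in \Sp^d$ and reals $c_i$.

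\emph{Sufficiency.} The key observation is that each zonal function $x \mapsto W_n^\l(x)$, with $\l = (d-1)/2$, already belongs to $\curlp$. Fixing an orthonormal basis $\{ Y_{n,k} \}_k$ of $\mathcal{H}_n$, the addition theorem writes $W_n^\l(\langle \th, \phi \rangle)$ as a strictly positive multiple of $\sum_k Y_{n,k}(\th) \overline{Y_{n,k}(\phi)}$, so that $\sum_{i,j} c_i c_j\, W_n^\l(\langle \th_i, \th_j \rangle)$ is a positive multiple of $\sum_k \bigl| \sum_i c_i Y_{n,k}(\th_i) \bigr|^2 \ge 0$. Since $\curlp$ is a convex cone that is closed under uniform limits of continuous functions, and since $\sum_n a_n < \infty$ together with $|W_n^\l(x)| \le W_n^\l(1) = 1$ forces the partial sums of $\sum_n a_n W_n^\l$ to converge uniformly to a continuous function, the limit lies in $\curlp$.

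\emph{Necessity.} Suppose $f \circ \cos$ is continuous and positive definite. For continuous $h : \Sp^d \to \R$, approximating the double integral $\iint_{\Sp^d \times \Sp^d} f(\langle \th, \phi \rangle)\, h(\th)\, h(\phi)\, d\sigma(\th)\, d\sigma(\phi)$ by Riemann sums — legitimate since $f$ and $h$ are continuous and $\Sp^d$ is compact — exhibits it as a limit of the nonnegative quadratic forms above, hence it is $\ge 0$. Taking $h = Y_{n,k}$ real and invoking the Funk--Hecke formula, under which $Y_{n,k}$ is an eigenfunction of $h \mapsto \int_{\Sp^d} f(\langle \cdot, \phi \rangle) h(\phi)\, d\sigma(\phi)$ with eigenvalue a strictly positive constant times $\hat f(n)$, the double integral becomes a positive multiple of $\hat f(n)$. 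Hence $\hat f(n) \ge 0$, and so $a_n := \om_n^\l \hat f(n) \ge 0$ since $\om_n^\l > 0$. To conclude I would recover $f$ from the $a_n$ by a positive-kernel summability argument — the Gegenbauer analogue of Abel/Fej\'er summation, whose kernel is nonnegative by the generating identity $\sum_n r^n C_n^\l(x) = (1 - 2xr + r^2)^{-\l} \ge 0$ — so that the means $\sum_n b_n(r)\, a_n W_n^\l$, with $0 \le b_n(r) \uparrow 1$, converge uniformly to $f$; evaluating at $x = 1$ and using $a_n \ge 0$ gives $\sum_n a_n = f(1) < \infty$, whence $\sum_n a_n W_n^\l$ converges uniformly to a continuous function having the same Gegenbauer coefficients as $f$, which by completeness of the ultraspherical polynomials must be $f$ itself. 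Under the normalisation $f(1) = 1$ natural for a correlation function, $(a_n)$ is then a probability distribution on $\N_0$.

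I expect the necessity direction to be the main obstacle, and within it two points need care: first, pinning down the Funk--Hecke eigenvalue precisely enough to see that it is a \emph{positive} multiple of $\hat f(n)$, and justifying the Riemann-sum passage from the finite-sum definition of positive definiteness to the integral inequality; and second, the convergence of the ultraspherical expansion of a merely continuous $f$, which genuinely requires a nonnegative summability kernel rather than plain partial sums. The sufficiency direction, by contrast, is essentially soft once the addition theorem is in hand.
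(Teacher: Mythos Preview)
The paper does not contain a proof of Schoenberg's Theorem; it is stated with attribution to \cite{sch} and used as background for the later results. There is therefore no paper proof to compare your proposal against. That said, your outline is essentially Schoenberg's own 1942 argument: sufficiency via the addition theorem for spherical harmonics showing each $W_n^\l$ is positive definite, and necessity via nonnegativity of the Gegenbauer coefficients through Funk--Hecke followed by an Abel/Poisson-type summability argument using the generating function $(1 - 2rx + r^2)^{-\l}$ as a nonnegative kernel. The two points you flag as needing care are indeed the places where the work lies, but both are handled in the classical literature and your sketch is sound.
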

The Fourier-Gegenbauer expansion of Schoenberg's theorem can be viewed as an infinite {\it mixture} of Gegenbauer polynomials, with mixture (weighting) law given by the (probability) sequence $(a_n)_{(n\in \N)}$. 

Often spatial data is also temporally distributed, raising the question of an analogous result to Theorem \ref{thrm:Sch} for isotropic positive definite functions defined on $\Sp^d \times \R$. The answer is provided in \cite{berp}, with an alternative proof via the Bochner-Godement theorem offered in \cite{bins}.

\begin{theorem}\label{thrm:BP}
For $d \in \N$, $\l = (d-1)/2$, a continuous function $f: [-1,1] \times \R \to \R$ is in $\mathcal{P}(\Sp^d \times \R)$ if and only if 
$$
f(x, t) = c \sum_{n=0}^\infty a_n(t) W^\l_n(x),
$$ 
with $c \in (0, \infty)$ and $(a_n(t))$      a sequence of positive definite functions on $\R$ with $\sum a_n(0) = 1$. Then $a_n(t) = {\om}_n^{\nu} \hat f(n,t)$. 
\end{theorem}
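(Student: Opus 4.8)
\noindent\emph{Proof idea.} I would treat the two implications separately. Write $c:=f(1,0)$, and for each $n\in\N_0$ set
\[
a_n(t):=\om_n^\l\,\hat f(n,t)=\om_n^\l\int_{-1}^1 f(x,t)\,W_n^\l(x)\,G_\l(dx),
\]
so that, up to the normalising constant, the assertion is that $f\in\mathcal{P}(\Sp^d\times\R)$ exactly when $c>0$ and every $a_n$ is a continuous positive definite function on $\R$ with $\sum_n a_n(0)=c$.

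For the ``if'' direction I would argue that each kernel $(\xi,\eta)\mapsto W_n^\l(\langle\xi,\eta\rangle)$ is positive definite on $\Sp^d$ --- the content of the easy half of Schoenberg's Theorem, applied to the degenerate probability law $\delta_n$ on $\N_0$ --- so that, the $a_n(s-u)$ being positive definite on $\R$ by hypothesis, each product $a_n(s-u)W_n^\l(\langle\xi,\eta\rangle)$ is positive definite on $\Sp^d\times\R$ by the Schur product theorem. Since $|W_n^\l|\le1$ on $[-1,1]$ and $|a_n(t)|\le a_n(0)$ with $\sum_n a_n(0)=1$, the series $\sum_n a_n(t)W_n^\l(x)$ converges absolutely and uniformly, so $f(x,t)=c\sum_n a_n(t)W_n^\l(x)$ is continuous and, being a positive multiple of a uniform limit of positive definite kernels on $\Sp^d\times\R$, is positive definite there.

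For the ``only if'' direction I would start from the observation that positive semidefiniteness of the $2\times2$ Gram matrices of $f$ forces $|f(x,t)|\le c$ for all $(x,t)$ (so either $c=0$ and $f\equiv0$, or $c>0$), whence continuity of $f$ and dominated convergence make each $a_n$ continuous on $\R$. The crux is positive definiteness of $a_n$ on $\R$. Here I would fix $t_1,\dots,t_m\in\R$, real weights $w_1,\dots,w_m$, and a degree-$n$ spherical harmonic $Y$ on $\Sp^d$ normalised in $L^2(\sigma)$, $\sigma$ the uniform probability measure; approximating by Riemann sums and invoking positive definiteness of $f$ on $\Sp^d\times\R$ gives $\sum_{j,k}w_jw_k\iint f(\langle\xi,\eta\rangle,t_j-t_k)Y(\xi)Y(\eta)\,\sigma(d\xi)\,\sigma(d\eta)\ge0$. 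Since the image of $\sigma$ under $\xi\mapsto\langle\xi,\eta\rangle$ is exactly $G_\l$ (with $\l=(d-1)/2$), the Funk--Hecke formula identifies the inner double integral with $\hat f(n,t_j-t_k)=a_n(t_j-t_k)/\om_n^\l$, and dividing by $\om_n^\l>0$ yields positive definiteness of $a_n$. It then remains to treat $t=0$ and convergence: the spatial kernel $f(\cdot,0)$ lies in $\mathcal{P}(\Sp^d)$, so Schoenberg's Theorem gives $a_n(0)\ge0$ and, evaluating at $x=1$, $\sum_n a_n(0)=f(1,0)=c$; and for each fixed $t$ the series $\sum_n a_n(t)W_n^\l(x)$ converges uniformly in $x$ (dominated by $\sum_n a_n(0)=c$) to a continuous function whose Fourier--Gegenbauer coefficients, computed termwise from the orthogonality relation, coincide with those of $f(\cdot,t)$ --- so, a continuous function on $[-1,1]$ being determined by those coefficients, the series sums to $f(x,t)$. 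Setting $\tilde a_n:=a_n/c$ puts $f$ in the stated form.

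I expect the positive-definiteness step for the $a_n$ to be the main obstacle: one must justify the passage from the finite quadratic-form inequalities defining positive definiteness on $\Sp^d\times\R$ to their integrated counterpart against a spherical harmonic, and apply Funk--Hecke with the normalisation carefully tracked through $\sigma$, $G_\l$ and $\om_n^\l$ (with the usual separate treatment of $d=1$); the remaining steps are then routine. A conceptually cleaner alternative that avoids Funk--Hecke is to realise $f$ as the covariance of a mean-zero isotropic Gaussian field on $\Sp^d\times\R$, expand the field in spherical harmonics, and identify each $a_n(t)$ (up to the dimension of the degree-$n$ harmonic space) with the covariance function of the corresponding temporal coefficient process, $O(d+1)$-invariance forcing distinct coefficient processes to be uncorrelated; this is in effect the Bochner--Godement argument of \cite{bins}.
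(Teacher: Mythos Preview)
The paper does not supply its own proof of this theorem: immediately before stating it, the authors say the result is ``provided in \cite{berp}, with an alternative proof via the Bochner--Godement theorem offered in \cite{bins}'', and no argument follows the statement. So there is no in-paper proof to compare against.

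Your sketch is a sound outline of what is essentially the Berg--Porcu argument the paper cites. The ``if'' direction via Schur products and uniform limits is correct and standard. For the ``only if'' direction, your plan to extract positive definiteness of each $a_n$ by testing the kernel against a degree-$n$ spherical harmonic and applying Funk--Hecke is the right idea; the Riemann-sum passage from finite quadratic forms to the integrated inequality is routine for continuous kernels (equivalently, a continuous positive definite kernel remains nonnegative when tested against any finite signed measure in each variable). The remaining steps --- Schoenberg at $t=0$ for summability, and identification of $f(\cdot,t)$ with its uniformly convergent Fourier--Gegenbauer series via completeness of the $W_n^\l$ --- are correctly handled. Your closing remark about the Gaussian-field/Bochner--Godement route is precisely the alternative the paper attributes to \cite{bins}. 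In short, you have reconstructed the two proofs the paper merely cites.
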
 

Question 10 in the recent survey by Porcu et al. \cite{poraf} raised the possibility of `walks on dimensions' on $\Sp^d \times \R$. Walks on dimensions, a term coined by Wendland \cite{wend}, are operators which preserve positivity but not dimension, and are useful in the construction of covariance functions for applications.  See for example Gaspari and Cohn \cite{gasc}, (Example 3.7, the multiquadric family, and Ex. 3.8, the $C_2$-Wendland family).  For members of $\mathcal{P}(\Sp^d)$, walks of one- and two- steps were given by Beatson and zu Castell \cite{beaz1, beaz2}. Our purpose here is to show that their results extend straight forwardly to the spatio-temporal setting, answering the question of  Porcu et al. \cite{poraf}.

\section{Two-step walks: mont\'ee and descente}

The recent work of Beatson and zu Castell \cite{beaz1} gives dimension walks for the classes $\mathcal{P}(\Sp^d)$, using spherical analogues of Matheron's {\sl mont\'ee} and {\sl descente} operators in Euclidean space \cite{mat1, mat2}.  These arise in geological contexts such as mining.  The mont\'ee raises the degree of regularity by integrating, so decreasing the dimension; the descente does the reverse. \\  

\begin{defn}
For $f \in \mathcal{P}(\Sp^d)$ absolutely continuous, define the descente by
\begin{equation}
\left( \tilde{\mathcal{D}} f \right) (x) := f'(x) \qquad x \in [-1, 1], \tag{D}
\label{eq:desc}
\end{equation}
and the mont\'ee by
\begin{equation}
\left( \tilde{\mathcal{I}} f \right) (x) := \int_{-1}^x f(t) \, \mathrm{d}t. \tag{M}
\label{eq:monte}
\end{equation}
\end{defn}
\ni We have (\cite{sze} (4.7.14))
\begin{equation*}
\tilde{\mathcal{D}}W^\l_n = 2\mu_\l W^{\l +1}_{n-1}, 
\end{equation*}
and
\begin{equation*}
\tilde{\mathcal{I}} W^{\l + 1}_{n-1} = \frac{1}{2\mu_\l} \left( W^\l_n - W^\l_n(-1)\right),
\end{equation*}
where
$$
\mu_\l = \begin{cases}
\l, \qquad \l > 0, \\
1, \qquad \l = 0.
\end{cases}
$$
Using these properties, \cite{beaz1} showed that, given $f \in \mathcal{P}(\Sp^{d+2})$ integrable, $C + \tilde{\mathcal{I}}f \in \mathcal{P}(\Sp^d)$ for some $C \in \R$, and conversely, given $g \in \mathcal{P}(\Sp^d)$ absolutely continuous, $\tilde{\mathcal{D}}g \in \mathcal{P}(\Sp^{d+2})$. By direct analogy to their work, we obtain the following definition and theorems.

\begin{defn}
For $f \in \mathcal{P}(\Sp^d \times \R)$  differentiable in its first variable, define
$$
\desc (x,t) := \frac{\partial}{\partial x} f(x, t).
$$
For $f \in \mathcal{P}(\Sp^d \times \R)$ integrable define
$$
\mont (x,t) := \int_{-1}^x f(u, t) \du.
$$
\end{defn}

\ni By Corollary 3.9 in \cite{berp}, $\partial^n f(x, t) / \partial x^n$ exists and is continuous when $n \leq \l$. Thus we can only guarantee existence and continuity of $\desc (x, t)$ for $\l \geq 1$, i.e. $d \geq 3$. For the interesting cases $d=1$ and $d=2$ we need to impose additional differentiability assumptions, namely that $\partial f(x, t) / \partial x$ exists and is continuous, to ensure the uniform convergence of its Schoenberg expansion.

\begin{theorem}
For $d \in \N$ and $f \in \mathcal{P}(\Sp^{d+2} \times \R)$, there exists a constant $C$ such that $C + \mathcal{I}f \in \mathcal{P}(\Sp^d \times R)$.
\end{theorem}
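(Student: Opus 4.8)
The plan is to pass to the Schoenberg-type expansion of Theorem~\ref{thrm:BP} and to imitate, coefficient by coefficient, the static argument of Beatson and zu Castell~\cite{beaz1}. Since $\Sp^{d+2}$ corresponds to the parameter $\l+1$, Theorem~\ref{thrm:BP} represents $f$ as
\[
f(x,t)=c\sum_{n=0}^{\infty}b_n(t)\,W^{\l+1}_n(x),
\]
with $c\in(0,\infty)$, each $b_n$ positive definite on $\R$, and $\sum_n b_n(0)=1$. First I would justify applying $\mathcal{I}$, which acts only in the variable $x$, term by term: the bounds $|b_n(t)|\le b_n(0)$ together with $\sum_n b_n(0)=1$ (and, for $d=1,2$, the regularity from Corollary~3.9 of~\cite{berp}) give uniform convergence in $x$ for each fixed $t$, so the interchange of integration and summation is legitimate.

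Next I would insert the identity $\mathcal{I}W^{\l+1}_n=\frac{1}{2\mu_\l}\bigl(W^\l_{n+1}-W^\l_{n+1}(-1)\bigr)$ and collect terms. Using $W^\l_{n+1}(-1)=(-1)^{n+1}$ and reindexing $m=n+1$, the boundary values assemble into the purely temporal quantity $\frac{c}{2\mu_\l}\sum_n(-1)^n b_n(t)=\frac{1}{2\mu_\l}f(-1,t)$, so that
\[
\mathcal{I}f(x,t)=\frac{1}{2\mu_\l}f(-1,t)\,W^\l_0(x)+\frac{c}{2\mu_\l}\sum_{m=1}^{\infty}b_{m-1}(t)\,W^\l_m(x).
\]
Adding $C$ changes only the $m=0$ term, so $C+\mathcal{I}f$ has Gegenbauer coefficients $C+\frac{1}{2\mu_\l}f(-1,t)$ at $m=0$ and $\frac{c}{2\mu_\l}b_{m-1}(t)$ at $m\ge1$. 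By Theorem~\ref{thrm:BP}, membership of $C+\mathcal{I}f$ in $\mathcal{P}(\Sp^d\times\R)$ is equivalent to positive definiteness of each of these coefficient functions (the coefficients at $t=0$ are summable, so the normalisation demanded by Theorem~\ref{thrm:BP} can be arranged through its leading constant). For $m\ge1$ this is immediate, since $\frac{c}{2\mu_\l}>0$ and each $b_{m-1}$ is positive definite.

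The entire difficulty collapses onto the single coefficient at $m=0$: I must choose the real constant $C$ so that $t\mapsto C+\frac{1}{2\mu_\l}f(-1,t)$ is positive definite on $\R$. The natural candidate is $C\ge \frac{1}{2\mu_\l}\sup_t|f(-1,t)|$, which is finite because $|f(-1,t)|\le c\sum_n b_n(0)=c$; such a $C$ at least renders this coefficient pointwise nonnegative. This is exactly the step with no counterpart in~\cite{beaz1}: there the boundary contribution $\frac{1}{2\mu_\l}f(-1)$ is a single number, and nonnegativity of the corresponding Gegenbauer coefficient is all that Schoenberg's theorem requires, whereas here the coefficient is the function $C+\frac{1}{2\mu_\l}f(-1,\cdot)$ and Theorem~\ref{thrm:BP} demands its positive definiteness. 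I therefore expect the decisive obstacle to be the passage from pointwise nonnegativity to positive definiteness of this shifted boundary term, and this is the point at which the structure of $f(-1,\cdot)$ must be exploited in order to pin down an admissible constant $C$.
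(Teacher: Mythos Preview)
Your approach is exactly the paper's: pass to the Schoenberg expansion for $\Sp^{d+2}\times\R$, integrate termwise using $\tilde{\mathcal I}W^{\l+1}_n=\frac{1}{2\mu_\l}\bigl(W^\l_{n+1}-W^\l_{n+1}(-1)\bigr)$, and read off the new coefficient sequence. Your identification of the boundary contribution as $\frac{1}{2\mu_\l}f(-1,t)$ is precisely what the paper calls $b_0(t)$, and for $m\ge1$ the two arguments coincide verbatim.

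The point you single out as the crux is exactly where the paper's own argument is thinnest. The paper does \emph{not} prove that $C+b_0(t)$ is positive definite on $\R$; it only establishes the bound $|b_0(t)|\le C$ for a finite constant $C$ (via $|a_n(t)|\le a_n(0)$ and, for $\l>0$, a Ces\`aro--Tauberian step from \cite{har}) and then asserts $C+\mathcal I f\in\mathcal P(\Sp^d\times\R)$. So you have not missed a device that the paper supplies: the paper simply takes the step you hesitate over, treating boundedness of $b_0$ as sufficient. Your hesitation is well founded, since in general $C-q(t)$ with $q$ a nonconstant positive definite function is \emph{never} positive definite, whatever $C$ is (evaluate the two-point quadratic form with coefficients $1,-1$); and $b_0(t)=\frac{1}{2\mu_\l}\sum_n(-1)^n a_n(t)$ can certainly contain such a piece (already $a_0$ constant and $a_1(t)$ a multiple of $\cos t$, all other $a_n\equiv0$, gives $b_0(t)=\frac{1}{2\mu_\l}(a_0-a_1(0)\cos t)$). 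In short, your sketch and the paper's proof agree up to the $m=0$ coefficient, and both leave the same unresolved issue there.
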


\begin{proof}[\emph{\textbf{Proof}}]
The proof proceeds as in \cite{beaz1}, with the appropriate changes.

Since $f(x,t) \in \mathcal{P}(\Sp^{d+2} \times \R)$, it has the Schoenberg expansion
$$
f(x, t) = \sum_{n=0}^\infty a_n(t) W^{\l +1}_n(x),
$$
with $a_n(t)$ as in Theorem 1.2. Integrating term by term and using (\ref{eq:desc}):
\begin{align*}
\mont (x,t) &= \int_{-1}^x \sum_{n=0}^\infty a_n(t) W^{\l + 1}_n(u) \du \\
&= \sum_{n=0}^\infty a_n(t) \int_{-1}^x W^{\l +1}_n(u) \du \\
&= \sum_{n=0}^\infty a_n(t) \, \tilde{\mathcal{I}}W^{\l+1}_n \\
&= \sum_{n=0}^\infty a_n(t) \frac{1}{2\mu_\l} \left( W^\l_{n+1}(x) - W^\l_{n+1}(-1) \right) \\
&= \sum_{n=1}^\infty \frac{a_{n-1}(t)}{2\mu_\l} W^\l_n(x) - \sum_{n=1}^\infty \frac{a_{n-1}(t)}{2\mu_\l} W^\l_n(-1).
\end{align*}
So,
$$
(\mathcal{I}f)(x,t) = \sum_{n=0}^\infty b_n(t) W^\l_n(x),
$$
where
$$
b_n(t) = \begin{dcases}
\frac{a_{n-1}(t)}{2\mu_\l}, \qquad n = 1, 2, 3, \ldots \\
- \sum_{i=0}^\infty \frac{a_{i-1}(t)}{2\mu_\l} W^\l_i(-1), \qquad n = 0.
\end{dcases}
$$
When $n \geq 1$ the $b_n(t)$ are clearly positive definite on $\R$, and $|2 \mu_\l \sum_{n=1}^\infty b_n(t)| \leq 2 \mu_\l \sum_{n=1}^\infty b_n(0) = \sum_{n=0}^\infty a_n(0) < \infty$. The only term which needs more comment is $b_0(t)$. The $n = 0$ term in the Gegenbauer expansion, $b_0(t)W^\l_0(x) = b_0(t)$, is constant in $x$, so as $|b_0(t) | \leq b_0(0) =: C$, say, we find $C + \mathcal{I}f \in \curlpr$.

When $\l = 0$, as above,
\begin{align*}
|b_0(t)| &= \bigl\lvert \half \sum_{n=1}^\infty a_{n-1}(t)W^0_n(-1) \bigr\rvert \\
&= \bigl\lvert \half \sum_{n=1}^\infty a_{n-1}(t) \bigr\rvert \leq \half \sum_{n=1}^\infty |a_{n-t}(t) | \\
&\leq \half \sum_{n=1}^\infty a_{n-1}(0) =: C < \infty.
\end{align*}

Similarly for $\l > 0$, $W^\l_n(-x) = (-1)^nW^\l_n(x)$ (\cite{sze}, (4.1.3)), so
\begin{align*}
|b_0(t)| 
&= \bigl\lvert \sum_{m=1}^\infty \frac{(-1)^{m+1}a_{m-1}(t)}{2\mu_\l} W^\l_m(1) \bigr\rvert 
\leq \sum_{m=1}^\infty \frac{|a_{m-1}(t)|}{2\mu_\l} \leq \sum_{m=1}^\infty \frac{|a_{m-1}(0)|}{2\mu_\l}.
\end{align*}
By (\cite{sze}, Th. 9.1.3) the sum is $(C,k)$-summable for $k > \l + 1/2$ and so, since all the terms in the sum are non-negative, convergent (\cite{har} Th. 64).  So
$$
|b_0(t)| 
\leq \sum_{m=1}^\infty \frac{a_{m-1}(0)}{2\mu_\l} 
=: C < \infty, 
$$
giving the required bound for $b_0(t)$.          \end{proof}

\begin{theorem}
For $d \in \N$ and $f: [-1, 1] \times \R \to \R$ continuously differentiable in its first argument: if $f \in \curlpr$, then $\partial / \partial x f(x, t) \in \mathcal{P}(\Sp^{d+2} \times \R).$
\end{theorem}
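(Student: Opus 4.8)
The plan is to mirror the previous theorem's proof, running the Schoenberg expansion through the descente operator term by term. Since $f \in \curlpr$, Theorem \ref{thrm:BP} gives
$$
f(x,t) = \sum_{n=0}^\infty a_n(t) W^\l_n(x),
$$
with $(a_n(t))$ positive definite on $\R$ and $\sum_n a_n(0) = 1$ (absorbing the constant $c$). Applying $\partial/\partial x$ and using the identity $\tilde{\mathcal D} W^\l_n = 2\mu_\l W^{\l+1}_{n-1}$ (with the $n=0$ term killed), one formally obtains
$$
\desc(x,t) = \sum_{n=1}^\infty 2\mu_\l\, a_n(t)\, W^{\l+1}_{n-1}(x) = \sum_{m=0}^\infty 2\mu_\l\, a_{m+1}(t)\, W^{\l+1}_m(x),
$$
so the candidate Schoenberg coefficients on $\Sp^{d+2}\times\R$ are $b_m(t) = 2\mu_\l\, a_{m+1}(t)$. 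These are positive definite on $\R$ as non-negative scalar multiples of the $a_{m+1}(t)$, and $\sum_m b_m(0) = 2\mu_\l \sum_{m\ge 1} a_m(0) \le 2\mu_\l < \infty$; after renormalising by this finite constant (and noting $b_m(0)$ sums to a positive value unless $f$ is constant in $x$, the degenerate case), the sequence $(b_m(t)/\sum_k b_k(0))$ meets the hypotheses of Theorem \ref{thrm:BP}, so $\desc \in \mathcal{P}(\Sp^{d+2}\times\R)$.

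The first real step is to justify differentiating the Schoenberg series term by term. Here the hypothesis that $f$ is continuously differentiable in $x$ does the work: as remarked after the definition of $\desc$, for $d\ge 3$ (i.e. $\l\ge 1$) Corollary 3.9 of \cite{berp} already guarantees this, while for $d=1,2$ the added $C^1$ assumption is precisely what ensures the differentiated series converges uniformly (for fixed $t$) to $\partial f/\partial x$. I would invoke a standard theorem on term-by-term differentiation of a series of functions — uniform convergence of the differentiated series plus pointwise convergence of the original — applied on $[-1,1]$ for each fixed $t$, to identify $\partial f/\partial x$ with $\sum_n a_n(t)\tilde{\mathcal D}W^\l_n$. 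Continuity of $\desc$ in $(x,t)$ jointly then follows since each $a_n(t)$ is continuous and the convergence is locally uniform in $t$ (controlled by $a_n(0)$, using $|a_n(t)|\le a_n(0)$).

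A small bookkeeping point: the reindexing $n\mapsto n-1$ must be handled carefully so that the $n=0$ term, which $\tilde{\mathcal D}$ annihilates, is correctly dropped — this is the mirror image of the shift appearing in the previous proof. I would also note that, unlike the mont\'ee case, no stray constant term $b_0$ requiring separate Abel/Ces\`aro-summability estimates appears here: descente only simplifies matters, since differentiation lowers degrees and introduces no boundary term. The main (and essentially only) obstacle is thus the term-by-term differentiation in low dimensions, which is exactly why the theorem is stated with the $C^1$ hypothesis rather than for arbitrary $f\in\curlpr$; everything after that is the positive-definiteness bookkeeping carried over verbatim from \cite{beaz1}.
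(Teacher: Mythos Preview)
Your proposal is correct and follows essentially the same route as the paper: term-by-term differentiation of the Schoenberg expansion using $\tilde{\mathcal D}W^\l_n = 2\mu_\l W^{\l+1}_{n-1}$, reindexing to obtain coefficients $b_m(t)=2\mu_\l\,a_{m+1}(t)$, and appealing to the $C^1$ hypothesis to justify the differentiation. The paper's own proof is in fact terser than yours, giving the computation in four lines and simply asserting uniform convergence of the differentiated series (with an aside that one could alternatively extend \cite{beaz1}, Lemma 2.4 and Theorem 2.3); your additional remarks on normalisation, the dropped $n=0$ term, and the absence of a stray constant are sound but not strictly needed.
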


\begin{proof}[\emph{\textbf{Proof.}}]

We may differentiate termwise, as the series of derivatives is uniformly convergent:
\begin{align*}
\frac{\partial}{\partial x} f(x, t) 
&= \frac{\partial}{\partial x} \left( a_0(t) +  \sum_{n=1}^\infty a_n(t) W^\l_n(x) \right) 
= \sum_{n=1}^\infty a_n(t) \frac{\partial}{\partial x} W^\l_n(x) \\
 &= \sum_{n=1}^\infty a_n(t) 2 \mu_\l W^{\l+1}_{n-1}(x) \tag{by ($D$)} \\
 &= \sum_{n=0}^\infty 2 \mu_\l a_{n+1}(t) W^{\l+1}_n(x).
\end{align*}
Or, one may extend the proof of (\cite{beaz1}, Lemma 2.4, Th. 2.3), along the same lines as in the proof above.   \end{proof}

We also have the following result for the infinite-dimensional case (the Hilbert sphere), analogous to that in \cite{trubz}.  

\begin{theorem}
The class $\mathcal{P}(\Sp^\infty \times \R)$ of geotemporal covariances on the Hilbert sphere cross time is closed under $\mathcal{I}$ (up to an additive constant). It is not closed under $\mathcal{D}$: if $f \in \mathcal{P}(\Sp^\infty \times \R)$, then $\desc \in \mathcal{P}(\Sp^\infty \times \R)$ only if $\sum n a_n(0) < \infty$.
\end{theorem}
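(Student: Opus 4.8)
The plan is to re-run the term-by-term computations of the two preceding theorems with the ultraspherical polynomials $W_n^\l$ replaced by their $\l\to\infty$ limits $x^n$, which are positive definite on $\Sp^\infty$ and there play the role the $W_n^\l$ play on $\Sp^d$. The one external ingredient is the analogue of Theorem~\ref{thrm:BP} for the Hilbert sphere: a continuous $f:[-1,1]\times\R\to\R$ lies in $\mathcal{P}(\Sp^\infty\times\R)$ if and only if $f(x,t)=c\sum_{n\ge0}a_n(t)x^n$ with $c>0$, each $a_n$ positive definite on $\R$, and $\sum_n a_n(0)=1$; in particular the $a_n(0)$ are summable, which secures uniform convergence on $[-1,1]\times\R$ and hence continuity. (This is the $d\to\infty$ form of Theorem~\ref{thrm:BP}; cf.\ \cite{trubz} for the purely spatial statement.)

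For the mont\'ee, integrating termwise --- legitimate by the uniform convergence above --- gives
\[
\mont(x,t)=\sum_{n\ge0}a_n(t)\,\frac{x^{n+1}-(-1)^{n+1}}{n+1}=\sum_{m\ge1}\frac{a_{m-1}(t)}{m}\,x^m+b_0(t),\qquad b_0(t)=\sum_{n\ge0}\frac{(-1)^na_n(t)}{n+1}.
\]
For $m\ge1$ the coefficient $a_{m-1}(t)/m$ is positive definite and $\sum_{m\ge1}a_{m-1}(0)/m\le\sum_n a_n(0)<\infty$, so the only term needing comment is, exactly as in the proof of the mont\'ee theorem above, the $n=0$ term $b_0(t)$: it is constant in $x$ and satisfies $|b_0(t)|\le\sum_n a_n(0)/(n+1)<\infty$, and absorbing it into a free additive constant gives $C+\mont\in\mathcal{P}(\Sp^\infty\times\R)$. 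I expect this half to transfer verbatim from fixed dimension --- the multipliers $1/(n+1)$ only help summability.

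The descente behaves differently, because passing the derivative through the series introduces the factor $n$ --- the reciprocal of $1/(n+1)$ --- which can destroy summability. Differentiating termwise on $(-1,1)$,
\[
\desc(x,t)=\sum_{n\ge1}n\,a_n(t)\,x^{n-1}=\sum_{m\ge0}(m+1)\,a_{m+1}(t)\,x^m,
\]
with positive definite coefficients $(m+1)a_{m+1}(t)$. By the characterisation above together with uniqueness of the power-series coefficients, $\desc\in\mathcal{P}(\Sp^\infty\times\R)$ precisely when in addition $\sum_{m\ge0}(m+1)a_{m+1}(0)<\infty$, i.e.\ $\sum_{n\ge1}n\,a_n(0)<\infty$: necessity because a member of the class is continuous, so its expansion converges at $x=1$ and its $x^m$-coefficient at $t=0$ equals $(m+1)a_{m+1}(0)$; sufficiency because the displayed series then has positive definite coefficients with summable $t=0$ values and so lies in the class. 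Taking, for instance, $a_n(t)=(6/\pi^2)(n+1)^{-2}g(t)$ for any positive definite $g$ on $\R$ with $g(0)=1$ gives $f\in\mathcal{P}(\Sp^\infty\times\R)$ with $\sum_n a_n(0)=1$ but $\sum_n n\,a_n(0)=\infty$, so $\desc\notin\mathcal{P}(\Sp^\infty\times\R)$: the class is not closed under $\mathcal{D}$.

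The only delicate point --- and it is one requiring care rather than work --- is the handling of the two exceptional coefficients: for $\mathcal{I}$, recognising that the $n=0$ term carries no positivity of its own and must be absorbed into the additive constant, just as in fixed dimension; and for $\mathcal{D}$, arguing that the failure of $\sum_n n\,a_n(0)<\infty$ genuinely ejects $\desc$ from $\mathcal{P}(\Sp^\infty\times\R)$, with no alternative expansion available to rescue it, which rests on uniqueness of the expansion and on the continuity built into the definition of the class. The substantive content is the contrast between the two multipliers: $1/(n+1)$ on the way down is harmless, whereas $n$ on the way up is not.
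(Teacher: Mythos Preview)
Your argument is correct and follows essentially the same route as the paper: expand $f$ as $\sum a_n(t)x^n$, integrate or differentiate termwise, and read off the new coefficients $a_{n-1}(t)/n$ (mont\'ee) and $(n+1)a_{n+1}(t)$ (descente). You are in fact slightly more thorough than the paper --- you justify termwise integration via uniform convergence, argue both necessity and sufficiency for the descente condition, and supply an explicit counterexample $a_n(t)=(6/\pi^2)(n+1)^{-2}g(t)$ showing non-closure, which the paper leaves implicit.
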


\begin{proof}
This is straightforward: functions $f \in \mathcal{P}(\Sp^\infty \times \R)$ have the Schoenberg expansion
$$
f(x, t) = \sum_{n=0}^\infty a_n(t) x^n.
$$
Integrating with respect to $x$:
$$
\mont (x, t) 
= \int_{-1}^x f(u, t) du
 = \sum_{n=0}^\infty \frac{a_n(t)}{n+1} \left(x^{n+1} - (-1)^{n+1}\right ) 
 = \sum_{n=0}^\infty b_n(t)x^n,
 $$
where, as in Theorem 2.1, 
$$
b_n(t) = \begin{cases}
a_{n-1}(t)/n, \qquad n = 1, 2, \ldots \\
\sum_{i=0}^\infty \left(a_i(t)(-1)^i\right)/(i+1), \qquad n = 0.
\end{cases}
$$
Also $b_0(t)$ is bounded by $\sum a_n(0) / (n+1) < \infty$. So for a suitable constant $C$, $C + \mont \in \mathcal{P}(\Sp^\infty \times \R)$.

Turning to the {\sl descente} operator: by (\cite{BerPP}, Thrm. 1.1.) the coefficients $a_n(t)$ are given by
$$
a_n(t) = \tilde{a}_n(x,t)\big|_{x=0}, \quad \tilde{a}_n(x,t) = \frac{1}{n!} \frac{\partial^n f(x,t)}{\partial x^n}.
$$
So, as in Theorem 2.2, consider
$$
\frac{\partial f(x,t)}{\partial x} = \sum_{n=0}^\infty b_n(t) x^n,
$$
where
$$
b_n(t) = \tilde{b}_n(x, t) \big|_{x=0}, \qquad \tilde{b}_n(t) = \frac{1}{n!}\frac{\partial^{n+1}f(x,t)}{\partial x^{n+1}}.
$$
So,
$$
\tilde{b}_n(x,t) = (n+1) \tilde{a}_{n+1}(x,t),
$$
and
$$
b_n(t) = (n+1)a_{n+1}(t).
$$
Thus, if $(n+1)a_{n+1}(0) < \infty$, $\partial f(x,t) / \partial x \in \mathcal{P}(\Sp^\infty \times \R)$.
\end{proof}

\section{One-Step Walks}

In \cite{beaz2} Beatson and zu Castell proposed one-step walk alternatives to the {\sl mont\'ee} and {\sl descente} operators using weighted {\sl Riemann-Liouville fractional integrals and derivatives}. As above, these results extend straightforwardly to the geotemporal case with little modification.

\begin{defn}[following {\cite{beaz2}}]
Define, for $f(x, t) \in \mathcal{P}(\Sp^d \times \R)$ and $\l \geq 0$,
\begin{align*}
{}^{\a}I^\l_+ f(x, t) &:= (1 + x)^{\l + \a} \int_{-1}^x (x - u)^{\a - 1} (1+u)^\l f(u, t) \du ,\\
{}^\a I^\l_- f(x, t) &:= (1 - x)^{\l + \a} \int_{x}^1 (u - x)^{\a - 1} (1-u)^\l f(u, t) \du ,\\
{}^{\a}{\mathcal{I}}^\l_{\pm} &:= {}^{\a}I^\l_+ \, {\pm} \, {}^{\a}I^\l_-.
\end{align*}
If $f$ is absolutely continuous then we may also define
\begin{align*}
{}^{\a}D^\l_+ f(x,t) &:= (1+x) \frac{\partial}{\partial x} \left( (1+x)^{-\l} \int_{-1}^x (x - u)^{\a - 1} (1+u)^{\l - \a} f(u, t) \du \right), \\
{}^{\a}D^\l_- f(x,t) &:= (1-x) \frac{\partial}{\partial x} \left( (1-x)^{-\l} \int_{x}^1 (u - x)^{\a - 1} (1 -u)^{\l - \a} f(u, t) \du \right), \\
{}^{\a}{\mathcal{D}}^\l_{\pm} &:= {}^{\a}D^\l_+ \, {\pm} \, {}^{\a}D^\l_-.
\end{align*}
\end{defn}

Unlike the classical Riemann-Liouville operators ${}^{\a}I^{RL}_\pm$ and ${}^{\a}D^{RL}_\pm$ the operators ${}^{\a}I^\l_\pm$ and ${}^{\a}D^\l_\pm$ do not have the semi-group property -- that is. integration/differentiation of orders ${\a}_1$ and ${\a}_2$ in succession does not yield order ${\a}_1 + {\a}_2$ in general. \\ 
\indent The case $\a = 1/2$ yields one-step walks on spheres. The actions of the half-step operators on the Gegenbauer polynomials are found in \cite{beaz2}, and summarised below, abbreviating ${}^{\half}\mathcal{I}^\l_\pm $ to $\mathcal{J}^\l_\pm$ for convenience. 

\begin{propn}\cite{beaz2}
For $\l > 0$, $n \in \N$, $x \in [-1,1]$,
\begin{align*}
\mathcal{J}^\l_+ W^{\l+1/2}_n(x) 
&= \frac{\sqrt{\pi} \Gamma(\l)}{\Gamma(\l + 1/2)} \frac{n+2\l}{n + \l + 1/2} W^\l_n (x), \\
\mathcal{J}^\l_- W^{\l+1/2}_n(x, 0) 
&= \frac{\sqrt{\pi} \Gamma(\l)}{\Gamma(\l + 1/2)} \frac{n+2\l}{n + \l + 1/2} W^\l_{n+1} (x). 
\end{align*}
When $\l = 0$ simply take the limit $\l \to 0^+$ and obtain
$$
\mathcal{J}^0_+ P_n(x) = \frac{2}{n + 1/2} T_n(x), \qquad \mathcal{J}^0_- P_n(x) = \frac{2}{n + 1/2} T_{n+1}(x).
$$
where $P_n(x)$ are the Legendre polynomials.
\end{propn}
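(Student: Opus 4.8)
The plan is to observe that the statement is genuinely a theorem about ultraspherical polynomials in which the time variable is a passive parameter, so that it reduces verbatim to the spherical result of Beatson and zu Castell \cite{beaz2}. Each operator ${}^{1/2}\mathcal{I}^\l_\pm$ acts on $f(x,t)$ by a weighted half-order fractional integration in the variable $x$ alone, and the basis functions $W^{\l+1/2}_n$ carry no $t$-dependence whatsoever; hence for every $t$ the quantity $\mathcal{J}^\l_\pm W^{\l+1/2}_n$ is exactly the one evaluated in \cite{beaz2}. The only point requiring comment is the legitimacy of applying $\mathcal{J}^\l_\pm$ term by term to the Schoenberg expansion of a general $f\in\mathcal{P}(\Sp^d\times\R)$, i.e. the uniform convergence of $\sum_n a_n(t)\,\mathcal{J}^\l_\pm W^{\l+1/2}_n(x)$; this is the same issue already discussed above for $d\in\{1,2\}$, and it is not needed for the Proposition itself, which is a polynomial identity.

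For completeness I would spell out the underlying spherical computation as follows. On the $+$-branch substitute $u=-1+(1+x)s$ (and on the $-$-branch $u=1-(1-x)s$), which turns the inner integral into $\int_0^1(1-s)^{-1/2}s^\l W^{\l+1/2}_n(\,\cdot\,)\,\mathrm{d}s$ up to a power of $1\pm x$ that the prefactor in the definition is built to cancel. Inserting the hypergeometric (equivalently, symmetric-Jacobi) form of $W^{\l+1/2}_n$ and integrating monomial by monomial against $(1-s)^{-1/2}s^\l$ produces Beta integrals $B(1/2,\l+k+1)$; resumming identifies ${}^{1/2}I^\l_+ W^{\l+1/2}_n$ and ${}^{1/2}I^\l_- W^{\l+1/2}_n$, up to explicit Gamma-function constants, with the Jacobi polynomials $P_n^{(\l-1/2,\l+1/2)}$ and $P_n^{(\l+1/2,\l-1/2)}$ — this is the classical fractional-integration formula for Jacobi polynomials. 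One then forms $\mathcal{J}^\l_\pm={}^{1/2}I^\l_+\pm{}^{1/2}I^\l_-$ and appeals to the contiguous relations for Jacobi polynomials to collapse the symmetric combination onto $W^\l_n$ and the antisymmetric one onto $W^\l_{n+1}$, tracking the normalisations $C_n^\nu(1)$ hidden in $W^\nu_n=C_n^\nu/C_n^\nu(1)$ to land the stated scalar. I expect this last step — choosing the precise contiguous relation that makes the two ``crossed-index'' Jacobi polynomials telescope onto a single ultraspherical polynomial of the claimed degree, rather than a two-term remainder, and then chasing the Gamma factors — to be the only real obstacle; everything else is a change of variables and a Beta integral.

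The $\l=0$ statement follows by letting $\l\to0^+$ in the identity just established: for fixed $n$ and $x$ both sides are continuous functions of $\l\geq0$ (the operators $\mathcal{J}^\l_\pm$ applied to a fixed polynomial depend continuously on $\l$ by dominated convergence, and $W^\l_n$, $W^{\l+1/2}_n$ and the scalar prefactor likewise). Inserting the known limits $W^{1/2}_n=P_n$ (Legendre) on the domain side and $W^0_n=T_n$ (Chebyshev) on the range side, and evaluating the limiting constant, yields the displayed formulae.
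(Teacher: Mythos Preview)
The paper does not prove this proposition at all: it is presented as a summary of results quoted from \cite{beaz2}, with no argument given. Your opening paragraph --- observing that the operators act only in $x$, the basis polynomials carry no $t$-dependence, and hence the identity is literally the spherical statement of Beatson and zu Castell --- is therefore precisely the paper's own position, and is all that is required here.

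Your second and third paragraphs go well beyond the paper, sketching the actual fractional-integration computation (substitution to a Beta integral, identification with crossed-index Jacobi polynomials, collapse via contiguous relations, and the $\l\to0^+$ limit). This is the standard route to such formulae and is how \cite{beaz2} proceeds, but none of it appears in the present paper, which simply defers to the citation. So your proposal is correct and strictly more detailed than what the paper offers; the ``obstacle'' you flag in the contiguous-relation step is real bookkeeping but belongs to \cite{beaz2}, not to this paper.
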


\begin{propn}\cite{beaz2}
For $\l > 0$, $n \in \N$, $x \in [-1,1]$,
\begin{align*}
\mathcal{D}^\l_+ W^{\l}_n(x) &= \frac{\sqrt{\pi} \Gamma(\l + 1/2)}{\Gamma(\l )} \frac{n+2\l}{n + \l } W^{\l + 1/2}l_{n-1} (x), \\
\mathcal{D}^\l_- W^{\l}_n(x) &= \frac{\sqrt{\pi} \Gamma(\l + 1/2)}{\Gamma(\l)} \frac{2n}{n + \l} W^{\l +1/2}_n (x). 
\end{align*}
When $\l = 0$ again take the limit $\l \to 0^+$ and obtain
$$
\mathcal{D}^0_+ T_n(x) = n \pi P_{n-1}(x), \qquad \mathcal{D}^0_- T_n(x) = n \pi P_n(x).
$$
\end{propn}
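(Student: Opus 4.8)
The plan is to observe that, exactly as for the mont\'ee and descente of Section~2, the half-step operators ${}^{\a}I^\l_{\pm}$ and ${}^{\a}D^\l_{\pm}$ (hence also $\mathcal{J}^\l_{\pm}$ and $\mathcal{D}^\l_{\pm}$) act only on the spatial variable $x$, the temporal variable $t$ entering purely as a spectator, and that the asserted identities involve $t$ nowhere; so the proposition is precisely the one-dimensional statement of \cite{beaz2}, applied at each fixed $t$, and holds verbatim here. I would record this as the proof, and then, for orientation, recall the mechanism behind the \cite{beaz2} identities.

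Fixing $t$, I would write $W^\l_n(x)$ as a constant (a ratio of Gamma functions) times the Jacobi polynomial $P^{(\l-1/2,\l-1/2)}_n(x)$. The inner weighted Riemann--Liouville integral in ${}^{1/2}D^\l_{+}W^\l_n$ becomes, after the substitution $u=-1+(1+x)s$, a Beta integral, which a classical fractional-integration identity for Jacobi polynomials re-expresses as a constant multiple of $(1+x)^\l P^{(\l-1,\l)}_n(x)$; cancelling the $(1+x)^{-\l}$ prefactor, differentiating by the Jacobi rule $\tfrac{d}{dx}P^{(a,b)}_n(x)=\tfrac12(n+a+b+1)P^{(a+1,b+1)}_{n-1}(x)$, and restoring the $(1+x)$ prefactor yields a constant multiple $A_n(1+x)P^{(\l,\l+1)}_{n-1}(x)$. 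The action of ${}^{1/2}D^\l_{-}$ then follows by the reflection $x\mapsto -x$, using $W^\l_n(-x)=(-1)^nW^\l_n(x)$ and $P^{(a,b)}_m(-x)=(-1)^mP^{(b,a)}_m(x)$, and equals $A_n(1-x)P^{(\l+1,\l)}_{n-1}(x)$. Forming $\mathcal{D}^\l_{\pm}={}^{1/2}D^\l_{+}\pm{}^{1/2}D^\l_{-}$ and invoking the contiguous relations
\begin{align*}
(1+x)P^{(\l,\l+1)}_{m}(x)+(1-x)P^{(\l+1,\l)}_{m}(x)&=2P^{(\l,\l)}_{m}(x),\\
(1+x)P^{(\l,\l+1)}_{m}(x)-(1-x)P^{(\l+1,\l)}_{m}(x)&=\frac{2(m+1)}{m+1+\l}P^{(\l,\l)}_{m+1}(x),
\end{align*}
with $m=n-1$, collapses each combination to a single Jacobi polynomial --- of degree $n-1$ for $\mathcal{D}^\l_{+}$ and degree $n$ for $\mathcal{D}^\l_{-}$ --- which, converted back to the $W^{\l+1/2}$ normalisation, gives the stated formulae.

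For $\l=0$ I would pass to the limit $\l\to0^+$ directly in the integral representation --- the weight $(1+u)^{\l-1/2}$ remains integrable and $W^\l_n\to T_n$ --- recovering $\mathcal{D}^0_{+}T_n=n\pi P_{n-1}$ and $\mathcal{D}^0_{-}T_n=n\pi P_n$. The step I expect to be the main obstacle is the bookkeeping of the Gamma-function constants through the three conversions (Gegenbauer to Jacobi, the fractional integral, and back) together with the two contiguous relations, and checking that the $\l\to0^+$ passage is legitimate; the fractional-calculus input is classical and the temporal variable enters nowhere, so the passage from $\mathcal{P}(\Sp^d)$ to $\mathcal{P}(\Sp^d\times\R)$ is immediate.
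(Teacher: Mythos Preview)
Your proposal is correct and matches the paper's treatment: the paper does not prove this proposition at all but simply cites it from \cite{beaz2} (it is introduced as a summary of ``the actions of the half-step operators on the Gegenbauer polynomials \ldots\ found in \cite{beaz2}''), and your first paragraph makes exactly this observation. Your additional sketch of the Jacobi-polynomial mechanism behind the \cite{beaz2} identities goes beyond what the paper records, but is consistent with how those identities are derived; note only that the proposition as stated involves no $t$ whatsoever, so your remark about ``applied at each fixed $t$'' is superfluous here --- it is a purely spatial identity, quoted for later use in Theorems~3.1 and~3.2.
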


The results below extend those of \cite{beaz2} to our context.

\begin{theorem}If $d \in \N$ and $f \in \mathcal{P}(\Sp^{d+1} \times \R)$, then $\mathcal{J}^\l_\pm f \in \mathcal{P}(\Sp^d \times \R)$. 
\end{theorem}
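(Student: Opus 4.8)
The plan is to follow the pattern of Theorems 2.1 and 2.2: expand $f$ by Schoenberg's theorem, apply $\mathcal{J}^\l_\pm$ term by term, read off the transformed Gegenbauer coefficients from Proposition 3.1, and then check that these coefficients form a summable sequence of positive definite functions on $\R$, so that Theorem~\ref{thrm:BP} yields the conclusion.

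Since $f\in\mathcal{P}(\Sp^{d+1}\times\R)$ and the sphere $\Sp^{d+1}$ corresponds to the Gegenbauer index $\l+\tfrac12$, Theorem~\ref{thrm:BP} gives $f(x,t)=c\sum_{n=0}^{\infty}a_n(t)\,W^{\l+1/2}_n(x)$ with $c\in(0,\infty)$, each $a_n$ positive definite on $\R$, and $\sum_n a_n(0)=1$. Since $|a_n(t)|\le a_n(0)$ and $|W^{\l+1/2}_n(x)|\le W^{\l+1/2}_n(1)=1$, this series converges absolutely and uniformly on $[-1,1]\times\R$. I would then justify applying $\mathcal{J}^\l_\pm={}^{1/2}I^\l_+\pm{}^{1/2}I^\l_-$ termwise: each of ${}^{1/2}I^\l_+$ and ${}^{1/2}I^\l_-$ integrates against a kernel of the form $(x-u)^{-1/2}(1\pm u)^{\l}$, which is integrable on the interval in question (the endpoint singularity has exponent $-\tfrac12>-1$, and $\l\ge0$), so the uniform convergence above lets us interchange the series with the integral. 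Invoking Proposition 3.1 (and its $\l=0$ version with Legendre and Chebyshev polynomials when $d=1$) then gives
$$
\mathcal{J}^\l_+ f(x,t)=c\sum_{n=0}^{\infty}\kappa_n a_n(t)W^{\l}_n(x),\qquad \mathcal{J}^\l_- f(x,t)=c\sum_{n=0}^{\infty}\kappa_n a_n(t)W^{\l}_{n+1}(x),
$$
where $\kappa_n=\frac{\sqrt{\pi}\,\Gamma(\l)}{\Gamma(\l+1/2)}\cdot\frac{n+2\l}{n+\l+1/2}>0$ for $\l>0$ (and $\kappa_n=\frac{2}{n+1/2}$ for $\l=0$).

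The remaining step is to inspect the new coefficients. For $\mathcal{J}^\l_+ f$ the coefficient of $W^\l_n$ is $b_n(t)=c\,\kappa_n a_n(t)$, while for $\mathcal{J}^\l_- f$, after reindexing, $b_0\equiv0$ and $b_n(t)=c\,\kappa_{n-1}a_{n-1}(t)$ for $n\ge1$; in either case $b_n$ is a nonnegative constant times a positive definite function, hence positive definite on $\R$. From $\kappa_n=1+\frac{\l-1/2}{n+\l+1/2}$ the sequence $(\kappa_n)$ is bounded, so $\sum_n b_n(0)\le c\,(\sup_n\kappa_n)\sum_n a_n(0)<\infty$, and this sum is strictly positive unless $f\equiv0$. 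Putting $c':=\sum_n b_n(0)\in(0,\infty)$, the sequence $(b_n(t)/c')$ is positive definite with $\sum_n b_n(0)/c'=1$, and $\mathcal{J}^\l_\pm f(x,t)=c'\sum_n (b_n(t)/c')W^\l_n(x)$ converges uniformly (Weierstrass M-test, $|b_n(t)W^\l_n(x)|\le b_n(0)$), hence represents a continuous function of precisely the form characterised by Theorem~\ref{thrm:BP}; therefore $\mathcal{J}^\l_\pm f\in\mathcal{P}(\Sp^d\times\R)$.

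I expect no genuine obstacle here: unlike the fractional derivative $\mathcal{D}^\l_\pm$, the operator $\mathcal{J}^\l_\pm$ is smoothing, so no additional differentiability hypothesis is needed and the cases $d=1,2$ are not special. The two points requiring a little care are the term-by-term interchange --- handled by the integrability of the fractional-integral kernels together with the uniform convergence of the Schoenberg series of $f$ --- and the boundedness of the multiplier $\kappa_n$, which is exactly what ensures the transformed coefficient sequence remains summable.
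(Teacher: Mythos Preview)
Your argument is correct and follows the same route as the paper, which simply refers back to Beatson and zu Castell: expand $f$ via Theorem~\ref{thrm:BP}, apply $\mathcal{J}^\l_\pm$ termwise using the action on Gegenbauer polynomials in Proposition~3.1, and check that the resulting coefficient sequence is again a summable family of positive definite functions. One small slip: your displayed identity $\kappa_n=1+\frac{\l-1/2}{n+\l+1/2}$ drops the constant prefactor $\frac{\sqrt{\pi}\,\Gamma(\l)}{\Gamma(\l+1/2)}$; this does not affect the boundedness conclusion, but you should write $\kappa_n=\frac{\sqrt{\pi}\,\Gamma(\l)}{\Gamma(\l+1/2)}\bigl(1+\frac{\l-1/2}{n+\l+1/2}\bigr)$.
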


\begin{proof} 
As in Section 2, the proof follows Beatson and zu Castell's exposition \cite{beaz2}. The boundedness and positivity of the operations $\mathcal{J}^\l_\pm$ follows as there. The crux of $\mathcal{J}^\l_\pm$'s effect on functions in $\mathcal{P}(\Sp^{d+1}\times \R)$ is its action on the Gegenbauer polynomials, described in Theorem 3.3 of \cite{beaz2}.
\end{proof}

\begin{theorem}
If $d \in \N$ and $f \in \mathcal{P}(\Sp^d \times \R)$ and $\mathcal{D}^\l_\pm f$ are continuous, then $\mathcal{D}^\l_\pm f \in \mathcal{P}(\Sp^{d+1} \times \R)$.
\end{theorem}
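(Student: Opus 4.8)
The plan is to follow the proof of Theorem 2.2 and of (\cite{beaz2}, Section~3), keeping the temporal variable passive. By Theorem 1.2 we may write $f(x,t)=\sum_{n=0}^{\infty}a_n(t)\,W^{\l}_n(x)$ (absorbing the constant $c$), with each $a_n$ a continuous positive definite function on $\R$ and $\sum_n a_n(0)<\infty$. The aim is to show that $\mathcal{D}^{\l}_{\pm}$ may be applied term by term and produces a Schoenberg expansion at parameter $\l+\tfrac12$, i.e. on $\Sp^{d+1}$.

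First I would check that $\mathcal{D}^{\l}_{\pm}$ commutes with the sum. Write $\mathcal{D}^{\l}_{\pm}=L_2\circ L_1$, where $L_1$ is the weighted fractional integral $f\mapsto(1\pm x)^{-\l}\int(x-u)^{-1/2}(1\pm u)^{\l-1/2}f(u,t)\,\du$ and $L_2 g=(1\pm x)\,\partial g/\partial x$. The kernel of $L_1$ is integrable in $u$ uniformly in $x$ (since $\l-\tfrac12>-1$) and $L_1 f$ is continuous up to $x=\mp1$, so $L_1$ is a bounded operator on the continuous functions and commutes with uniformly convergent partial sums. For $L_2$ I would invoke the action of $\mathcal{D}^{\l}_{\pm}$ on the Gegenbauer polynomials recorded above: $\mathcal{D}^{\l}_{\pm}W^{\l}_n=\gamma_n\,W^{\l+1/2}_{m(n)}$ with $m(n)\in\{n-1,n\}$ and $\gamma_n\ge0$. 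When $\l>0$ the multipliers $\tfrac{n+2\l}{n+\l}$ and $\tfrac{2n}{n+\l}$ lie in $[0,2]$, so, using $|a_n(t)|\le a_n(0)$ and $|W^{\l+1/2}_m|\le1$ on $[-1,1]$, the termwise series $\sum_n\gamma_n a_n(t)\,W^{\l+1/2}_{m(n)}(x)$ converges uniformly in $(x,t)$. Letting $f_N$ denote the $N$th partial sum of $f$, we have $L_1 f_N\to L_1 f$ uniformly while $(1\pm x)^{-1}\mathcal{D}^{\l}_{\pm}f_N=\partial_x(L_1 f_N)$ converges uniformly away from $x=\mp1$; the standard theorem on differentiating a uniformly convergent sequence then identifies $\mathcal{D}^{\l}_{\pm}f$ with $\sum_n\gamma_n a_n(t)\,W^{\l+1/2}_{m(n)}(x)$ there, and the hypothesis that $\mathcal{D}^{\l}_{\pm}f$ is continuous extends this to the endpoint.

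The boundary case $\l=0$ ($d=1$) is the step I expect to be the main obstacle: there $\gamma_n=\pi n$ grows, so the continuity hypothesis must supply the missing summability. I would fix $t=0$ and evaluate at $x=1$: since $\mathcal{D}^{0}_{\pm}f(\cdot,0)$ is continuous on $[-1,1]$, its Fourier--Legendre series is $(C,k)$-summable to it for suitable $k$ (\cite{sze}, Th.~9.1.3), and its Legendre coefficients are, up to positive constants, $\{n\,a_n(0)\}\ge0$; since $P_n(1)=1$, the Tauberian theorem for series of non-negative terms (\cite{har}, Th.~64) gives $\sum_n n\,a_n(0)<\infty$. Hence $\sum_n n|a_n(t)|\le\sum_n n\,a_n(0)<\infty$ by positive definiteness, the termwise series again converges uniformly, and the identification $\mathcal{D}^{0}_{\pm}f=\sum_n\pi n\,a_n(t)\,W^{1/2}_{m(n)}(x)$ goes through as in the previous paragraph.

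In all cases we reach $\mathcal{D}^{\l}_{\pm}f(x,t)=c'\sum_{m=0}^{\infty}b_m(t)\,W^{\l+1/2}_m(x)$ with $c'>0$ and $b_m$ a non-negative constant multiple of $a_{\sigma(m)}$, $\sigma(m)\in\{m,m+1\}$; thus each $b_m$ is a continuous positive definite function on $\R$, and $\sum_m b_m(0)<\infty$ by the bounds above. After rescaling so that the coefficients at $t=0$ sum to one, Theorem 1.2 with $d$ replaced by $d+1$ (so its parameter becomes $\l+\tfrac12$) gives $\mathcal{D}^{\l}_{\pm}f\in\mathcal{P}(\Sp^{d+1}\times\R)$, as required; in the degenerate case where all $a_n(0)$ with $n\ge1$ vanish, $f$ is constant in $x$ and $\mathcal{D}^{\l}_{\pm}f\equiv0$, trivially in the class.
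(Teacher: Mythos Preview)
Your argument is correct and follows the same overall plan as the paper --- adapt \cite{beaz2}, with the Ces\`aro--Tauberian step (\cite{sze}, Th.~9.1.3 together with \cite{har}, Th.~64) extracting the summability that the continuity hypothesis on $\mathcal{D}^{\l}_{\pm}f$ encodes --- but you are considerably more explicit than the paper's sketch, and your reduction from positive-definite coefficients to non-negative scalars differs. The paper notes that the complication over \cite{beaz2} is that the $a_n$ are positive definite functions rather than non-negative reals, and handles this by freezing an arbitrary quadratic form $\sum_{i,j} c_i\bar c_j\,a_n(t_i-t_j)\ge 0$ and applying the Tauberian theorem to each such choice. You instead specialise to $t=0$, where $a_n(0)\ge 0$, obtain $\sum_n \gamma_n\,a_n(0)<\infty$ there, and then invoke $|a_n(t)|\le a_n(0)$ to transport the bound to all $t$; this is precisely the one-point instance of the paper's quadratic-form device and is all that is actually needed for the Schoenberg criterion. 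You also separate $\l>0$, where the multipliers $(n+2\l)/(n+\l)$ and $2n/(n+\l)$ stay in $[0,2]$ so that no Tauberian step is required at all, from the boundary case $\l=0$, where $\gamma_n=\pi n$ makes it essential; the paper does not isolate this distinction and treats the Tauberian argument as the nub throughout.
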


\begin{proof}
We adapt the proof of the corresponding result in \cite{beaz2}.  As in the proof of Theorem 2.1, the nub is the use of the Tauberian theorem (\cite{har}, Th. 64) to pass from Ces\`aro convergence with non-negative summands to convergence.  Here matters are more complicated, as here we are dealing with series of positive definite functions, rather than of non-negative reals.  But, we may reduce this case to the previous one.  For, the definition of positive definiteness involves the non-negativity of the relevant quadratic forms, for arbitrary degree and arbitrary choice of scalars.  For each such choice, the `Ces\`aro-Tauberian' theorem above gives convergence.  Since this holds for each such choice, one has the desired extension, and the rest of the proof is as in \cite{beaz2}. 
\end{proof}

\section*{References}

\bibliography{wod}

\vspace{2em}

\footnotesize{
\noindent {\textbf{N. H. Bingham}, Mathematics Department, Imperial College London, UK, SW7 2AZ} \\
\noindent \url{n.bingham@imperial.ac.uk} 

\noindent {\textbf{Tasmin L. Symons}, Mathematics Department, Imperial College London, UK, SW7 2AZ }\\
\noindent \url{tasmin.symons11@imperial.ac.uk} 

\vspace{1em}

\noindent The second author is supported by the EPSRC Centre for Doctoral Training in the Mathematics of Planet Earth.
}
\end{document}